\newlength{\tabwidth}
\newlength{\tabheight}
\newlength{\tabrule}
\newlength{\tabwidthx}
\newlength{\tabheightx}
\def\gentabbox#1#2#3#4{\vbox to \tabheight{\setlength{\tabrule}{#3}%
  \setlength{\tabwidthx}{#1\tabwidth}\addtolength{\tabwidthx}{\tabrule}%

\setlength{\tabheightx}{#2\tabheight}\addtolength{\tabheightx}{-\tabheight}%
  \hbox to #1\tabwidth{%
 \hspace{-0.5\tabrule}\rule{\tabrule}{#2\tabheight}\hspace{-\tabrule}%
    \vbox to #2\tabheight{\hsize=\tabwidthx%
      \vspace{-0.5\tabrule}\hrule width\tabwidthx height\tabrule%
      \vspace{-0.5\tabrule}\vfil%
      \hbox to \tabwidthx{\hss#4\hss}%
        \vfil\vspace{-0.5\tabrule}%
      \hrule width\tabwidthx height\tabrule\vspace{-0.5\tabrule}}%
 \hspace{-\tabrule}\rule{\tabrule}{#2\tabheight}\hspace{-0.5\tabrule}}%
  \vspace{-\tabheightx}}}
\def\genblankbox#1#2{\vbox to \tabheight{\vfil\hbox to
#1\tabwidth{\hfil}}}
\def\tabbox#1#2#3{\gentabbox{#1}{#2}{0.4pt}{\strut #3}}
\newenvironment{tableau}{\bgroup\catcode`\:=13 \catcode`\.=13
  \catcode`\;=13 \catcode`\>=13 \catcode`\^=13
  \setlength{\tabheight}{3ex}\setlength{\tabwidth}{3ex}%
  \def\b##1##2##3{\gentabbox{##1}{##2}{1.2pt}{\vbox{##3}}}%
  \def\n##1##2##3{\gentabbox{##1}{##2}{0.4pt}{\vbox{##3}}}%
  \vbox\bgroup\offinterlineskip}{\egroup\egroup}
\newtheorem{theorem}{Theorem}[section]
\newtheorem{lemma}[theorem]{Lemma}
\newtheorem{proposition}[theorem]{Proposition}
\newtheorem*{theorem*}{Theorem}
\theoremstyle{definition}
\newtheorem{definition}[theorem]{Definition}
\theoremstyle{remark}
\newtheorem{example}[theorem]{Example}
\newtheorem{remark}[theorem]{Remark}
\DeclareMathOperator{\Sym}{\mathit{S_n}}	
\newcommand{\RSnew}{\mathbf{RS}}				
\DeclareMathOperator{\sh}{sh}			
\newcommand{\Ptab}{\mathbf{P}}						
\newcommand{\Qtab}{\mathbf{Q}}						
\DeclareMathOperator{\sgn}{\mathit{sgn}}		
\DeclareMathOperator{\sign}{\mathit{sign}}	
\DeclareMathOperator{\Inv}{\mathit{Inv}}		
\DeclareMathOperator{\inv}{\mathit{inv}}		
\DeclareMathOperator{\spin}{\mathit{spin}}
\numberwithin{figure}{section}
\numberwithin{theorem}{section}
\begin{document}
\title{On the sign representations for the complex reflection groups $G(r,p,n)$}
\author{Aba Mbirika}
\address{Department of Mathematics, Bowdoin College}
\email{ambirika@bowdoin.edu}

\author{Thomas Pietraho}
\address{Department of Mathematics, Bowdoin College}
\email{tpietrah@bowdoin.edu}

\author{William Silver}
\address{Department of Computer Science, Bowdoin College, and Cognex Corporation}
\email{wsilver@bowdoin.edu}

\date{}
\thanks{The second author would like to thank Skidmore College for its hospitality during the completion of this work.}

\begin{abstract}
We present a formula for the values of the sign representations of the complex reflection groups $G(r,p,n)$ in terms of its image under a generalized Robinson-Schensted algorithm.
\end{abstract}

\maketitle

\pagestyle{myheadings}

\markboth{Aba Mbirika, Thomas Pietraho, and William Silver}{On the sign representations for the complex reflection groups $G(r,p,n)$}


\section{Introduction}

The classical Robinson-Schensted algorithm establishes a bijection between permutations $w \in \Sym$ and ordered pairs of same-shape standard Young tableaux of size $n$.  This map has proven particularly well-suited to certain questions in the representation theory of both $\Sym$ and the semisimple Lie groups of type $A$.  For instance, Kazhdan-Lusztig cells as well as the primitive spectra of semisimple Lie algebras can be readily described in terms of images of this correspondence.

Other sometimes more elementary representation-theoretic information requires more work to extract from standard Young tableaux.  For instance, in independent work, A.~Reifegerste~\cite{reifegerste} and J.~Sj\"ostrand~\cite{sjostrand} developed a method for reading the value of the sign representation of a permutation $w \in \Sym$ based on two tableaux statistics.
Let $w \in \Sym$ and write $RS(w) = (P,Q)$ for its image under the classical Robinson-Schensted map.  If we write $e$ for the number of squares in the even-indexed rows of $P$, let $\sign(T)$ be the sign of a tableau $T$ derived from its inversion number, and let $\sgn$ be the usual sign representation on $\Sym$, then
$$ \sgn(w) = (-1)^{e} \cdot \sign(P) \cdot \sign(Q).$$

The focus of our paper is to extend this result to the complex reflection groups $G(r,p,n).$  Its two main ingredients generalize readily to this setting.
First, the Robinson-Schensted algorithm admits a straightforward extension mapping each $w \in G(r,p,n)$ to a same-shape pair of $r$-multitableaux, see R.~Stanley~\cite{stanley:some} and L.~Iancu~\cite{iancu}.  At the same time, the sign of a permutation in $\Sym$ extends to a family of $r$ one-dimensional representations of $G(r,p,n)$.  After defining new spin and sign statistics on $r$-multitableaux, we prove the following extension of the result of A.~Reifegerste and J.~Sj\"ostrand:

\begin{theorem*}
Let $w \in G(r,p,n)$ and write $\mathbf{RS}(w) = (\mathbf{P},\mathbf{Q})$ for its image under the generalized Robinson-Schensted map.  Given a primitive $r^{th}$ root of unity $\zeta$ and the associated family $\{\sgn_i\}_{i=0}^{r-1}$ of representations of $G(r,p,n)$,  we have
$$\sgn_i(w) = (-1)^{e(\Ptab)} \cdot (\zeta^i)^{\spin(\mathbf{P})+\spin(\mathbf{Q})} \cdot \sign(\mathbf{P}) \cdot \sign(\mathbf{Q}),$$
where $e(\Ptab)$ is the total sum of the lengths of the even-indexed rows of the component tableaux of $\Ptab$.
\end{theorem*}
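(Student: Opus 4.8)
The plan is to factor both sides of the identity according to the two independent ways in which $\sgn_i$ generalizes the ordinary sign. Since $\sgn_i$ is one‑dimensional, it is the product of the $\{\pm 1\}$‑valued character $w\mapsto\sgn(\overline w)$, where $\overline w\in S_n$ is the underlying permutation of $w$ and $\sgn$ is the usual sign, with the linear character recording the colours of $w$, whose value at $w$ is $(\zeta^i)$ raised to the total colour of $w$. Matching this against the right‑hand side $(-1)^{e(\Ptab)}\,(\zeta^i)^{\spin(\Ptab)+\spin(\Qtab)}\,\sign(\Ptab)\sign(\Qtab)$ — and specializing to $i=0$ to separate the two factors — I would reduce the theorem to proving, on the one hand, the \emph{permutation identity} $\sgn(\overline w)=(-1)^{e(\Ptab)}\sign(\Ptab)\sign(\Qtab)$, and, on the other, the \emph{colour congruence} $\spin(\Ptab)+\spin(\Qtab)\equiv(\text{total colour of }w)\pmod r$.

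The colour congruence is immediate from the construction of $\mathbf{RS}$ together with the definition of $\spin$: $\mathbf{RS}$ distributes the coloured letters of $w$ among the components of $\Ptab$ and of $\Qtab$ by colour, and the spin statistics were introduced precisely so that $\spin(\Ptab)+\spin(\Qtab)$ recovers the total colour of $w$. So here there is nothing to do beyond unwinding the relevant definitions.

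For the permutation identity I would first reduce to $p=1$: because $G(r,p,n)\le G(r,1,n)$ and $\mathbf{RS}$, the statistics $e,\sign,\spin$, and the characters $\sgn_i$ all arise by restriction, the identity for $w\in G(r,p,n)$ follows from the one for $w\in G(r,1,n)$. Assuming $p=1$, I would use that $\mathbf{RS}$ is computed one colour class at a time: writing $u_c$ for the subpermutation of $w$ formed by the letters of colour $c$, the component $P^{(c)}$ (resp.\ $Q^{(c)}$) is the classical insertion (resp.\ recording) tableau of $u_c$ up to an order‑preserving relabelling of entries, which leaves $\sign$ unchanged. Applying the theorem of Reifegerste and Sj\"ostrand to each $u_c$ and multiplying over $c$, using $e(\Ptab)=\sum_c e(P^{(c)})$, gives $\prod_c\sgn(u_c)=(-1)^{e(\Ptab)}\prod_c\sign(P^{(c)})\prod_c\sign(Q^{(c)})$. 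Now $\prod_c\sgn(u_c)$ records precisely the inversions of $\overline w$ lying within a single colour class, so it differs from $\sgn(\overline w)$ by the parity of the inversions of $\overline w$ lying between two distinct classes; likewise $\prod_c\sign(P^{(c)})$ differs from $\sign(\Ptab)$ by the parity of the ``cross‑component'' contributions to the inversion number of $\Ptab$, and similarly for $\Qtab$. Thus the permutation identity reduces to the parity statement that the cross‑component inversions of $\Ptab$ and of $\Qtab$ together have the same parity as the cross‑class inversions of $\overline w$. I would prove this pair by pair: for positions $i<j$ whose images under $w$ carry distinct colours, one computes the joint contribution of $\{i,j\}$ to each of the three counts and checks that the $\Ptab$‑contribution plus the $\Qtab$‑contribution is congruent mod $2$ to the $\overline w$‑contribution (the verification splitting into two cases, according to whether the smaller position carries the smaller or the larger colour), and then sums over all such pairs.

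I expect this final parity computation to be the main obstacle: it is the one point at which the colours of $w$ interact nontrivially with the shape‑and‑order information, and carrying it out cleanly requires being careful both about how $\mathbf{RS}$ assigns positions to the components of $\Qtab$ and about the reading‑word convention used to define $\sign$ on a multitableau. The reduction to $p=1$, the colour congruence, and the appeal to the classical result are all routine once this bookkeeping is in place.
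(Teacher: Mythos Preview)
Your proposal is correct, and the pair-by-pair parity check you anticipate does go through: for a pair of positions $i,i'$ carrying distinct colours $k<l$ (say $a_i=k$, $a_{i'}=l$), the pair contributes to $\Inv(\Qtab)$ exactly when $i>i'$, to $\Inv(\Ptab)$ exactly when $\sigma_i>\sigma_{i'}$, and to the inversions of $\overline{w}$ exactly when the smaller of $i,i'$ carries the larger $\sigma$-value; one checks in the four cases that the first two contributions sum to the third mod $2$.

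Your route is genuinely different from the paper's. The paper does \emph{not} compute the cross-component parities directly. Instead it introduces \emph{admissible} left and right operations $L_i,R_i$ (multiplication by an adjacent transposition, allowed only when it moves entries of different colours) and shows that every $w$ is connected by such moves to an \emph{ascending} element $\tilde{w}$, one in which colours increase along positions and along values simultaneously. For ascending $\tilde{w}$ the cross-class inversions of $\overline{\tilde{w}}$ and the cross-component inversions of $\Ptab(\tilde{w}),\Qtab(\tilde{w})$ are all \emph{zero}, so the Reifegerste--Sj\"ostrand identity applied componentwise gives the formula with no bookkeeping at all; the work is instead in Proposition~3.4/Lemma~3.5, showing that each admissible move changes both $\sgn_i$ and $\pi_i$ by the same sign. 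What the paper's approach buys is a clean reduction to the symmetric group case with nothing left over, at the cost of setting up the equivalence relation; what yours buys is directness and no auxiliary machinery, at the cost of the small case analysis you flag. Either way the colour congruence is, as you say, immediate (indeed an equality, since $\Ptab$ and $\Qtab$ have the same shape and $2\spin(\Ptab)=\sum_k a_k$).
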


To prove this, we define a particular set of equivalence classes on $G(r,1,n)$ and show that the formula either holds or fails for all members of a given class.  By construction, each class contains a representative for which the theorem is easy to verify by a direct appeal to the original symmetric group formula, thereby completing the proof.

For the classical Weyl groups, all of which appear among the above complex reflection groups, the M\"obius function of the Bruhat order can be expressed in terms of the values of the sign representation~\cite{verma:mobius}. Its use is ubiquitous in Kazhdan-Lusztig theory, and the above formulas allow the values of the M\"obius function to be read off from the images of the appropriate Robinson-Schensted map.
This is especially relevant in light of the tableaux classification of Kazhdan-Lusztig cells.  In type $A$, left Kazhdan-Lusztig cells consist of those permutations whose recording tableaux agree, see A.~Joseph~\cite{joseph1} or S.~Ariki~\cite{ariki}.  A similar result holds for the so-called asymptotic left cells in the Iwahori-Hecke algebras in type $B$, this time in terms of $2$-multitableaux~\cite{bonnafe:iancu}.


\section{Preliminaries}

After defining the family of complex reflection groups $G(r,p,n)$ and describing their one-dimensional  representations, we define multipartitions, a generalization of the Robinson-Schensted algorithm, and tableaux statistics that we will use to describe these representations.
\subsection{Sign representations}
Consider positive integers $r$, $p$, and $n$ with $p$ dividing $r$ and let $\zeta= \text{exp}(2 \pi \sqrt{-1})/ r)$. We define the complex reflection groups $G(r,p,n)$ as subgroups of $GL_n(\mathbb{C})$ consisting of matrices such that
\begin{itemize}
    \item the entries are either $0$ or powers of $\zeta$,
    \item there is exactly one non-zero entry in each row and column,
    \item the $(r/p)$-th power of the product of all non-zero entries is $1$.
\end{itemize}

Together with thirty-four exceptional groups, the groups $G(r,p,n)$ account for all finite groups generated by complex reflections~\cite{shephard-todd}, and include among them all the classical Weyl groups. In our work the parameter $r$ will generally be fixed allowing us to write simply $W_n$ for the group $G(r,1,n).$  In order to establish  succinct notation, we will write $$[\zeta^{a_1} \sigma_1, \zeta^{a_2} \sigma_2, \ldots, \zeta^{a_n} \sigma_n]$$ for the matrix whose non-zero entry in the $i$th column is $\zeta^{a_i}$ and appears in row $\sigma_i$.  Utilizing this notation, define the set $S=\{s_0, \ldots, s_{n-1}\}$ where
\begin{align*}
s_0 & = [\zeta \cdot 1, 2, 3, \ldots, n] \text{, and } \\
s_i & =[1,2, \ldots, i-1, i+1, i, i+2, \ldots, n].
\end{align*}
Further, let $S'= \{s_0^p, s_0 s_1 s_0, s_i \; | \; 1 \leq i \leq n-1\}.$
The set $S$ generates $W_n$ with presentation given as
$$W_n = \langle s_i \, | \, s_0^r, s_i^2, (s_j s_k)^2, (s_0 s_1)^4, (s_l s_{l+1})^3, \text{ $i \geq 1$, $|j-k|>1$, $l \in [1, n-2]$} \rangle.$$
Subject to similar relations, $S'$ generates a subgroup $G(r,p,n)$ of $W_n$ of index $p$, see S.~Ariki~\cite{ariki95}.

There are exactly $2r$ one-dimensional representations of $W_n$; they divide naturally into two families.
\begin{definition}\label{def:one-diml-representations}
For each integer $i$  between $0$ and $r-1$, we define representations $\sigma_i$ and $\sgn_i$ of $W_n$ by specifying their values on the generating set $S$.  Let

        $$\tau_i^\epsilon(s_j) = \left\{
            \begin{array}{ll}
                \zeta^{i} & \text{ if $j=0$, and} \\
                (-1)^\epsilon & \text{ if $j=1, \ldots, n-1$}
            \end{array}
                \right.
        $$
and define $\sigma_i = \tau_i^0$ and $\sgn_i = \tau_i^1$.  Each becomes a representation of the subgroup $G(r,p,n)$ by restriction.
\end{definition}

\subsection{Multitableaux} \label{subsection:multitableaux}

We write a partition  $\lambda$ of an integer $m$ as a nonincreasing sequence of positive integers $(\lambda_1, \lambda_2, \ldots, \lambda_k)$ and define its rank as $|\lambda|=m$.  A {\it Young diagram} $[\lambda]$ of $\lambda$ is a left-justified array of boxes containing $\lambda_i$ boxes in its $i$th row.  The shape of a Young diagram will refer to its underlying partition.
With the integer $r$ fixed, a {\it multipartition of rank $n$} is an $r$-tuple $$\boldsymbol{\lambda} = (\lambda^0, \lambda^1, \ldots, \lambda^{r-1})$$ of partitions the sum of whose individual ranks equals $n$.  The {\it Young diagram} $[\boldsymbol{\lambda}]$ of $\boldsymbol{\lambda}$ is the $r$-tuple $([\lambda^0], \ldots, [\lambda^{r-1}])$.  We refer to $\boldsymbol{\lambda}$ as the {\it shape} of the diagram $[\boldsymbol{\lambda}]$ and define $|\boldsymbol{\lambda}|=n$.   We will follow a convention of denoting objects derived from multipartitions in boldface while writing those derived from single partitions using a normal weight font.

A {\it standard Young tableaux of shape} $\boldsymbol{\lambda}$  is the Young diagram $[\boldsymbol{\lambda}]$ of rank $n$ together with a labeling of each of its boxes of with the elements of $\mathbb{N}_n$ in such a way that each number is used exactly once, and the labels of the boxes within each component Young diagram $[\lambda^i]$ increase along its rows and down its columns.  Remembering that $r$ is fixed, we will write $\mathbf{SYT}_n$ for the set of all standard Young tableaux of rank $n$ whose shape is a multipartition with $r$ components.

\begin{definition}  A tableau $\mathbf{T} = (T_1, T_2, \ldots, T_r) \in \mathbf{SYT}_n$ will be called {\it ascending} if for each $i<r$, the elements of the set of labels of the component tableau $T_i$ are pairwise smaller than the elements of the set of labels of $T_{i+1}$.
\end{definition}

\begin{example}
Take $r=3$.  The following standard Young tableau $\mathbf{T}$ has shape $\boldsymbol{\lambda}=((2,1), (1,1), (3,3))$, is of rank $11$, and is ascending:
$$
\raisebox{.07in}{$\mathbf{T}=\Big($}
\hspace{.05in}
\begin{tiny}
\begin{tableau}
    :.{1}.{3}\\
    :.{2}\\
\end{tableau}
,\hspace{.1in}
\begin{tableau}
    :.{4}\\
    :.{5}\\
\end{tableau}
\hspace{.02in},\hspace{.1in}
\begin{tableau}
    :.{6}.{7}.{10}\\
    :.{8}.{9}.{11}\\
\end{tableau}
\end{tiny}
\hspace{.05in}
\raisebox{.07in}{\Big)}.
$$
\end{example}

 Following~\cite{stanley:some} and ~\cite{iancu}, we  define a map from $W_n$ to same-shape pairs of $r$-tuples of standard Young tableaux.
Consider $w = [\zeta^{a_1} \sigma_1, \zeta^{a_2} \sigma_2, \ldots, \zeta^{a_n} \sigma_n] \in W_n$ and define ordered sets $w^{(k)}= (\sigma_i \, | \, a_i = k)$ for $0 \leq k < r$.  Let $RS(w^{(k)}) = (P_k, Q_k)$ be the image of the sequence $w^{(k)}$ under the usual Robinson-Schensted map, labeling squares of $Q_k$ according to the relative positions of $i \in w^{(k)}$ within $w$, and define
$$\mathbf{P}(w) = (P_0, P_1, \ldots, P_{r-1}) \text{ \phantom{XX} and \phantom{XX}} \mathbf{Q}(w) = (Q_0, Q_1, \ldots, Q_{r-1}).$$
The multitableaux Robinson-Schensted map is defined by $\mathbf{RS}(w) = (\mathbf{P}(w), \mathbf{Q}(w)).$
It maps $W_n$ onto the set of same-shape pairs of elements of $\mathbf{SYT}_n$ and is in fact a bijection.

\subsection{Tableaux and multitableaux statistics}

Our goal is to describe values of the sign representations on $W_n$ under the above generalization of the Robinson-Schensted map.  To do so, we rely on a few statistics that can be readily computed from multitableaux.

\begin{definition}
An {\it inversion} in a Young tableau $T$ is a pair $(i,j)$ with $j>i$ for which the box labeled by $i$ is contained in a row strictly below the box labeled $j$.  Let $\Inv(T)$ be the set of inversions in $T$, and write $\inv(T)$ for its cardinality. If $\mathbf{T}= (T_0, T_1, \ldots T_{r-1})$ is a multitableau, we extend this notion and define:
$$\Inv(\mathbf{T}) = \bigsqcup_k \Inv (T_k) \sqcup \bigsqcup_{k<l} \Inv(T_k,T_l)$$
where $\Inv(T_k,T_l) = \{(j,i) \, | \, j>i, j \text{ is a label in $T_k$, $i$ is a label in $T_l$}\}.$  We will be mainly interested in the parity of the size of this set and define $$\sign(\mathbf{T}) = (-1)^{\inv(\mathbf{T})}.$$
\end{definition}

\begin{definition}
For a Young tableau $T$, write $e(T)$ for the total number of boxes in its rows of even index.  For a multitableau $\mathbf{T}= (T_0, T_1, \ldots T_{r-1})$, we write $\sh(T_k)$ for the shape of the Young diagram underlying $T_k$ and  define the statistics $e$ and $\spin$ as follows:
$$e(\mathbf{T}) = \sum_{k=0}^{r-1} e(T_k)  \; \text{ and } \; \spin(\mathbf{T}) = \frac{1}{2}  \sum_{k=0}^{r-1} k \cdot |\sh(T_k)|.$$
\end{definition}

The $\spin$ statistic provides a simple description of the image of the subgroup $G(r,p,n)$ under the $r$-multitableaux Robinson-Schensted map.

\begin{proposition}
$(\mathbf{P}, \mathbf{Q}) \in \mathbf{RS}(G(r,p,n))$ if and only if
$2 \spin(\mathbf{P}) \equiv 0 \pmod{p}.$
\end{proposition}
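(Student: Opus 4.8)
The plan is to reduce the statement to an elementary count of the root-of-unity exponents appearing in $w$. Since the map $\mathbf{RS}$ is a bijection from $W_n$ onto same-shape pairs of elements of $\mathbf{SYT}_n$, a same-shape pair $(\mathbf{P},\mathbf{Q})$ lies in $\mathbf{RS}(G(r,p,n))$ if and only if its unique $\mathbf{RS}$-preimage belongs to $G(r,p,n)$. So I would fix an arbitrary element $w = [\zeta^{a_1} \sigma_1, \zeta^{a_2} \sigma_2, \ldots, \zeta^{a_n} \sigma_n]$ of $W_n$ with $\mathbf{RS}(w) = (\mathbf{P},\mathbf{Q})$ and show that $w \in G(r,p,n)$ is equivalent to $2\spin(\mathbf{P}) \equiv 0 \pmod{p}$.

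First I would translate membership in $G(r,p,n)$ into a congruence on the exponents. The matrix $w$ has a single non-zero entry $\zeta^{a_i}$ in each column $i$, so the product of all its non-zero entries equals $\zeta^{a_1 + a_2 + \cdots + a_n}$, and its $(r/p)$-th power is $\zeta^{(r/p)(a_1 + \cdots + a_n)}$, which equals $1$ precisely when $p$ divides $a_1 + \cdots + a_n$. Hence $w \in G(r,p,n)$ if and only if $a_1 + \cdots + a_n \equiv 0 \pmod{p}$. Note that since $p \mid r$, this congruence depends only on $a_1 + \cdots + a_n \bmod r$ and is therefore insensitive to the choice of integer representatives for the exponents $a_i$.

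Next I would identify $2\spin(\mathbf{P})$ with this same sum of exponents. By the construction of the multitableaux Robinson-Schensted map, $\mathbf{P} = (P_0, P_1, \ldots, P_{r-1})$ where $P_k$ is the insertion tableau of the subword $w^{(k)} = (\sigma_i \mid a_i = k)$; in particular $|\sh(P_k)|$ is the number of indices $i$ with $a_i = k$. Choosing the representatives $a_i \in \{0, 1, \ldots, r-1\}$, it follows that
$$2\spin(\mathbf{P}) = \sum_{k=0}^{r-1} k \cdot |\sh(P_k)| = \sum_{k=0}^{r-1} k \cdot \#\{\, i \mid a_i = k \,\} = \sum_{i=1}^{n} a_i.$$
Combining the two displayed facts gives the asserted equivalence; and since $\mathbf{P}$ and $\mathbf{Q}$ share a shape we have $\spin(\mathbf{P}) = \spin(\mathbf{Q})$, so the criterion could equally be phrased using $\mathbf{Q}$.

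There is no serious obstacle here: the argument is pure bookkeeping. The only points requiring care are matching the defining condition of $G(r,p,n)$ (the vanishing of the $(r/p)$-th power of the product of non-zero entries) with the exponent-sum congruence, and checking that this congruence is well defined independently of how the exponents are represented as integers — both of which are immediate once $p \mid r$ is used.
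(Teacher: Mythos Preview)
Your proposal is correct and follows essentially the same argument as the paper: both show that $2\spin(\mathbf{P}) = \sum_{i=1}^{n} a_i$ by observing $|\sh(P_k)| = \#\{i \mid a_i = k\}$, and then translate membership in $G(r,p,n)$ into the congruence $p \mid \sum_i a_i$. Your write-up is slightly more explicit about the bijectivity of $\mathbf{RS}$ and the well-definedness of the congruence modulo $p$, but the substance is identical.
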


\begin{proof}

Write $w= [\zeta^{a_1} \sigma_1, \zeta^{a_2} \sigma_2, \ldots, \zeta^{a_n} \sigma_n ] \in G(r,p,n)$ and for each $0 \leq k \leq r-1$, set $\mathcal{A}_k := \{ i \, | \, a_i = k \}$.  If $\mathbf{RS}(w)=(\mathbf{P},\mathbf{Q})$, then $|\sh(P_k)| = |\mathcal{A}_k|$, and hence it follows that $k \cdot |\sh(P_k)|$ equals $\sum_{i \in \mathcal{A}_k} a_i$.  Since $\mathbb{N}_n = \bigsqcup_{k=0}^{r-1} \mathcal{A}_k$, we conclude that $$ \sum_{k=1}^{n} a_k = \sum_{k=0}^{r-1} k \cdot |\sh(P_k)| = 2 \spin(\mathbf{P}).$$  Finally, since $w \in G(r,p,n)$ if and only if $(\zeta^{a_1}\zeta^{a_2}\cdots\zeta^{a_n})^{\frac{r}{p}} = 1$, it follows that
$w \in G(r,p,n)$ if and only if  $2 \spin(\mathbf{P}) \equiv 0 \pmod{p},$ as claimed.
\end{proof}

\vspace{.1in}

\subsection{A set of functions and an example}
We define a sequence of functions on $W_n$.   In the next section we will show that they coincide with the sign representations on $W_n$.  Again, for $w \in W_n$, let $\mathbf{RS}(w)=(\mathbf{P},\mathbf{Q})$. For $0 \leq i < r$, we will write
$$\pi_i(w) = (-1)^{e(\Ptab)} \cdot (\zeta^{i})^{\spin(\mathbf{P}) + \spin(\mathbf{Q})} \cdot \sign(\mathbf{P}) \cdot \sign(\mathbf{Q}).$$

\vspace{.1in}

\begin{example}\label{running_example}
Consider $w = [ \zeta^1 \, 5,  1, \zeta^2 \, 3, 6, \zeta^2 \, 7, \zeta^1 \, 4,  2, 8]$ in $G(4,1,8).$  Recalling the notation in Section \ref{subsection:multitableaux}, we have
$w^{(0)} = (1, 6, 2, 8)$,
$w^{(1)} = (5,4)$,
$w^{(2)} = (3,7)$, and
$w^{(3)} = \emptyset$.  Further,
\begin{align*}
 & RS(w^{(0)}) =  \begin{tiny} \left(\; \young(128,6)\hspace{.05in} ,\hspace{.05in} \young(248,7) \;\right) \end{tiny}
& RS(w^{(1)}) & =  \begin{tiny} \left(\; \young(4,5)\hspace{.05in} , \hspace{.05in} \young(1,6) \;\right)\end{tiny} \\
 & RS(w^{(2)}) =  \begin{tiny}\left(\; \young(37)\hspace{.05in} , \hspace{.05in}\young(35) \;\right)\end{tiny}
& RS(w^{(3)})  & =  \begin{tiny} \left(\; \emptyset, \emptyset \;\right)\end{tiny}.
\end{align*}
From these we construct the Robinson-Schensted image of $w$:
$$\RSnew(w)=(\Ptab, \Qtab) = \begin{tiny} \left( \; \left( \, \young(128,6) \, , \, \young(4,5) \, , \, \young(37) \, , \, \emptyset \, \right) \;, \;\left( \, \young(248,7) \, , \, \young(1,6) \, , \, \young(35) \, , \, \emptyset \, \right) \; \right).\end{tiny}$$
We read off $\inv(\Ptab) = 10$, $\inv(\Qtab) = 12$, $e(\Ptab) = 2$, and $\spin(\Ptab) = \spin(\Qtab) = 3$.  Hence $\pi_i(w) =(\zeta^i)^2$ which coincides with $\sgn_i(w)$.

\end{example}


\section{Sign under the Robinson-Schensted map}

The aim of this section is to verify the formulas for the one-dimensional representations of $W_n$ given above. We reduce the problem to the setting of the classical Robinson-Schensted map by considering a particular equivalence relation on the elements of $W_n$.

\subsection{Ascending elements and admissible transformations}

Recall the notation from Section \ref{subsection:multitableaux} and
write $w \in W_n$ in one-line notation as $$w=[\zeta^{a_1} \sigma_1, \zeta^{a_2} \sigma_2, \ldots, \zeta^{a_n} \sigma_n ].$$
We will say that $w$ is {\it ascending} if the exponents $a_k$ increase weakly and for every integer $0 \leq i<r-1$ all elements of $w^{(i)}$ are smaller than the elements of $w^{(i+1)}$.
The following fact is immediate:

\begin{proposition}
Suppose that $w \in W_n$ is ascending.  Then its left and right Robinson-Schensted tableaux $\mathbf{P}{(w)}$ and $\mathbf{Q}{(w)}$ are both ascending.
\end{proposition}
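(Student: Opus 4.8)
The plan is to unwind the two notions of ``ascending'' in play and reduce the statement to a single observation about the index sets $\mathcal{A}_k = \{\, i \mid a_i = k \,\}$, $0 \le k < r$. Because $w$ is ascending, the exponent sequence $a_1 \le a_2 \le \cdots \le a_n$ is weakly increasing, so each $\mathcal{A}_k$ is a (possibly empty) block of consecutive integers, and whenever $\mathcal{A}_k$ and $\mathcal{A}_{k+1}$ are both nonempty one has $\max \mathcal{A}_k < \min \mathcal{A}_{k+1}$. Thus $\mathcal{A}_0, \ldots, \mathcal{A}_{r-1}$ are increasing blocks: every element of $\mathcal{A}_i$ is smaller than every element of $\mathcal{A}_{i+1}$, for each $i$ with $0 \le i < r-1$.

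First I would treat $\mathbf{Q}(w) = (Q_0, \ldots, Q_{r-1})$. By construction $Q_k$ is obtained from the ordinary recording tableau of $w^{(k)}$ by replacing its entry $j$ with the position in $w$ of the $j$-th letter of $w^{(k)}$. Since the letters of $w^{(k)}$ are listed in the order in which they occur in $w$, that position is exactly the $j$-th smallest element of $\mathcal{A}_k$, so this relabeling is the unique order-preserving bijection from $\{1, \ldots, |\mathcal{A}_k|\}$ onto $\mathcal{A}_k$. An order-preserving relabeling sends a standard Young tableau to a standard Young tableau, so $Q_k$ is standard with label set precisely $\mathcal{A}_k$. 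As the $\mathcal{A}_k$ are increasing blocks, the labels of $Q_i$ are all smaller than those of $Q_{i+1}$ for each $i < r-1$, which is exactly the statement that $\mathbf{Q}(w)$ is ascending.

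Next I would treat $\mathbf{P}(w) = (P_0, \ldots, P_{r-1})$. Here the relevant fact is that Schensted insertion does not change the set of entries of a word, so the label set of $P_k$ is the set $\mathcal{V}_k = \{\, \sigma_i \mid a_i = k \,\}$ of values appearing in $w^{(k)}$. But the second defining condition for an ascending element of $W_n$ says precisely that every element of $w^{(i)}$, hence of $\mathcal{V}_i$, is smaller than every element of $w^{(i+1)} = \mathcal{V}_{i+1}$, for $0 \le i < r-1$. Hence the labels of $P_i$ are all smaller than those of $P_{i+1}$, and $\mathbf{P}(w)$ is ascending as well.

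I do not expect a genuine obstacle — as already remarked, the claim is essentially a matching of definitions. The only point requiring a moment's care is the bookkeeping for $\mathbf{Q}(w)$: one must check that the position-relabeling defining $Q_k$ is order-preserving (so standardness survives) and has image exactly $\mathcal{A}_k$, after which the weak monotonicity of the exponent sequence does the rest. For $\mathbf{P}(w)$ it is enough to recall that the entry set of an RS insertion tableau equals the entry set of the inserted word.
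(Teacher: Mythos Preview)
Your argument is correct and is exactly the unwinding of definitions the paper has in mind; the paper itself declares the fact ``immediate'' and gives no proof. Your careful bookkeeping for $\mathbf{Q}(w)$ (that the position-relabeling is the order-preserving bijection onto the consecutive block $\mathcal{A}_k$) and the observation that the entry set of $P_k$ is $w^{(k)}$ are precisely the two checks that make the statement immediate.
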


We proceed to define equivalence classes on $W_n$, each with an ascending representative.  Adopting the notation from the definition above, consider $i>0$ and define $L_i(w)=s_i \cdot w$ whenever $a_i \neq a_{i+1}$ and  $R_i(w)=w \cdot s_i$ whenever
$\sigma_l =i$, $\sigma_k=i+1$, and $a_l \neq a_{k}$.  We will call operators of this form {\it left} and {\it right admissible}, respectively. To define an equivalence relation, we let $L_i(w) \sim w$ and $R_i(w) \sim w$ for every $i$ and take its transitive closure.

A right admissible operator permutes adjacent entries of $w$, while each left admissible operator exchanges some $\sigma_k$ and $\sigma_l$ which have different exponents in $w$ and satisfy $|\sigma_k-\sigma_l|=1$ while preserving the exponents in each position.  Thus by successive application of right admissible operators, the entries of $w$ can be arranged to have weakly increasing exponents.  Furthermore, by successive application of left admissible operators, the $\sigma_k$ can be permuted so that
for every integer $0 \leq i<r-1$ all  entries  with coefficient $\zeta^i$  are smaller than those with coefficient $\zeta^{i+1}$.  We have shown the following:

\begin{proposition}
Each $w \in W_n$ has an ascending representative $\tilde{w}$ in the equivalence class defined by left and right admissible operators.
\end{proposition}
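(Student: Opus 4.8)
The plan is to argue that the two defining conditions of an ascending element can be achieved one after the other, the second without disturbing the first. I would start from an arbitrary $w = [\zeta^{a_1}\sigma_1, \ldots, \zeta^{a_n}\sigma_n] \in W_n$ and first use right admissible operators to sort the exponent sequence $(a_1, \ldots, a_n)$ into weakly increasing order. The key observation, already noted in the paragraph preceding the statement, is that $R_i$ is applicable precisely when the adjacent exponents $a_i$ and $a_{i+1}$ differ, and in that case $R_i(w)$ simply transposes the two adjacent columns (both the $\sigma$-values and the exponents move together). So the right admissible operators let us perform any adjacent transposition of positions whose exponents disagree; a standard bubble-sort argument then shows we can reach a representative whose exponent sequence is weakly increasing — whenever the sequence is not yet sorted there is an adjacent descent $a_i > a_{i+1}$, in particular $a_i \neq a_{i+1}$, so $R_i$ applies and strictly decreases the number of inversions among the exponents. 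After finitely many steps this process terminates at a $w'$ with weakly increasing exponents, i.e.\ the blocks $w'^{(0)}, w'^{(1)}, \ldots, w'^{(r-1)}$ occupy consecutive stretches of positions in that order.

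Next I would apply left admissible operators to arrange the $\sigma$-values across blocks. Since $w'$ has weakly increasing exponents, I can describe its content by the sets $\mathcal{A}_k = \{\sigma_j : a_j = k\}$, and an element is ascending exactly when, in addition to the sorted exponents, every element of $\mathcal{A}_i$ is smaller than every element of $\mathcal{A}_{i+1}$ for all $0 \leq i < r-1$. A left admissible operator $L_i$, applicable whenever $i$ and $i+1$ lie in positions with different exponents, swaps the positions holding the values $i$ and $i+1$ while leaving all exponents in place; equivalently it swaps $i$ and $i+1$ between two different blocks $\mathcal{A}_k$ and $\mathcal{A}_l$. Crucially this operation does not change the exponent sequence, so the weak increase of the exponents established in the first step is preserved throughout. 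Thus the $\sigma$-values may be permuted freely among the blocks subject only to moving adjacent integers $\{i,i+1\}$ that currently sit in different blocks, and I claim this suffices to sort the blocks so that $\max \mathcal{A}_i < \min \mathcal{A}_{i+1}$ for every $i$.

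For the sorting claim I would again use a monovariant: if the blocks are not yet in the required order, there exist consecutive blocks $\mathcal{A}_i, \mathcal{A}_{i+1}$ and an integer $m$ with $m \in \mathcal{A}_{i+1}$ and $m+1 \in \mathcal{A}_i$ (take $m$ maximal such that $m$ is "out of place"), so $L_m$ applies and swaps these two neighbors, strictly reducing the number of pairs $(j', k')$ with $j' < k'$ but $j' \in \mathcal{A}_b$, $k' \in \mathcal{A}_a$ for some $a < b$. Iterating terminates at an element $\tilde w$ that is ascending, and since each $L_m$ and each $R_i$ relates an element to an equivalent one, $\tilde w \sim w$. The main obstacle to watch is making the second sorting step rigorous: one must verify both that whenever the cross-block order fails there really is an \emph{adjacent} pair of integers straddling consecutive blocks (so that $L_m$ is literally applicable as defined), and that swapping them strictly decreases the chosen count — this is where a careful choice of the monovariant and of which pair to swap matters, but it is a routine, if slightly fussy, bubble-sort bookkeeping argument.
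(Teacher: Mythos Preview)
Your proposal is correct and follows exactly the two-step strategy the paper uses (right admissible operators to sort the exponent sequence, then left admissible operators to sort the values across blocks), merely supplying the bubble-sort and monovariant details that the paper leaves implicit. One small imprecision: in the second step you do not need the blocks themselves to be \emph{consecutive}---it suffices that $m$ and $m+1$ lie in different blocks with $b(m)>b(m+1)$, which is exactly the applicability condition for $L_m$ and is guaranteed whenever the block function $b$ fails to be weakly increasing; with that tweak your inversion-count monovariant drops by exactly one at each step and the argument goes through.
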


Admissible operators  behave well with respect to the multitableaux Robinson-Schensted map:

\begin{proposition}
For $w \in W_n$, $\mathbf{P}(w)=\mathbf{P}(R_i(w))$ and $\mathbf{Q}(w)=\mathbf{Q}(L_i(w)).$
\end{proposition}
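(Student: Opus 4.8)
The plan is to treat the two assertions $\mathbf{P}(w) = \mathbf{P}(R_i(w))$ and $\mathbf{Q}(w) = \mathbf{Q}(L_i(w))$ by unwinding the definition of the multitableaux Robinson--Schensted map into its $r$ component Robinson--Schensted insertions and then invoking the corresponding known facts about the classical algorithm. Recall that $\mathbf{RS}(w)$ is assembled from the images $RS(w^{(k)}) = (P_k, Q_k)$ of the subsequences $w^{(k)} = (\sigma_j \mid a_j = k)$. The key observation is that each admissible operator acts on exactly one of these subsequences, and acts on it in the mildest possible way, so the problem localizes to a single classical insertion tableau.

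For the right admissible case, suppose $R_i(w) = w \cdot s_i$ is defined, so in $w$ we have $\sigma_l = i$, $\sigma_k = i+1$ with $\{k,l\} = \{\text{consecutive columns}\}$ — more precisely, right multiplication by $s_i$ transposes the entries of $w$ in positions $i$ and $i+1$ — and crucially the exponents in those two positions differ, $a_l \neq a_k$. Since right multiplication by $s_i$ swaps positions $i$ and $i+1$ of the one-line notation, and since those two positions carry entries belonging to \emph{different} subsequences $w^{(a_l)}$ and $w^{(a_k)}$, each individual subsequence $w^{(m)}$ is left unchanged \emph{as an ordered sequence of values} $\sigma_j$; only the recording data (the relative positions within $w$) of two entries in two distinct subsequences is affected. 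Therefore each $P_m$, being a function of the values of $w^{(m)}$ alone, is unchanged, giving $\mathbf{P}(w) = \mathbf{P}(R_i(w))$. For the left admissible case, one passes to the inverse: $L_i(w) = s_i \cdot w$, and left multiplication by $s_i$ exchanges the two entries $\sigma_k = i+1$ and $\sigma_l = i$ wherever they occur, keeping their exponents fixed. Here the cleanest route is to note that $\mathbf{Q}(w) = \mathbf{P}(w^{-1})$ (or the analogous statement compatible with the definition of $\mathbf{RS}$, which follows component-wise from the classical symmetry $RS(w) = (P,Q) \Leftrightarrow RS(w^{-1}) = (Q,P)$ together with the bookkeeping of exponents under inversion), and that left multiplication of $w$ by $s_i$ corresponds to right multiplication of $w^{-1}$ by $s_i$; one then checks that this right multiplication on $w^{-1}$ is exactly a right admissible move in the sense just handled, so $\mathbf{Q}(w) = \mathbf{P}(w^{-1}) = \mathbf{P}(w^{-1} s_i) = \mathbf{P}((s_i w)^{-1}) = \mathbf{Q}(s_i w) = \mathbf{Q}(L_i(w))$.

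I would carry this out in three steps: first, record the component-wise symmetry $\mathbf{RS}(w^{-1}) = (\mathbf{Q}(w), \mathbf{P}(w))$, tracking how the exponents $a_j$ and the subsequence index assignments transform under $w \mapsto w^{-1}$; second, verify that right multiplication by $s_i$ under the stated hypothesis ($\sigma_l = i$, $\sigma_k = i+1$, $a_l \neq a_k$) does not alter any of the value-sequences $w^{(m)}$ and hence fixes every $P_m$; third, deduce the left admissible statement from the first two steps by the inversion argument above. The main obstacle I anticipate is purely notational: making the correspondence between the condition defining $R_i$ (phrased in terms of $\sigma_l, \sigma_k$ and their exponents) and the condition defining $L_i$ (phrased in terms of positions and the requirement $a_i \neq a_{i+1}$) precise under inversion, since inversion swaps the roles of "positions" and "values" and correspondingly swaps which hypothesis reads naturally. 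Once the dictionary between $R_i$ on $w^{-1}$ and $L_i$ on $w$ is pinned down, the representation-theoretic content is nil — everything reduces to the elementary fact that the classical insertion tableau $P$ depends only on the word, not on its recording positions, and its recording tableau $Q$ depends symmetrically on the inverse.
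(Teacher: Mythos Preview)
Your treatment of the right admissible case is exactly the paper's: since the two positions swapped by $w \cdot s_i$ carry different exponents, each ordered subsequence $w^{(k)}$ is unchanged and hence so is every $P_k$.

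For the left admissible case your route diverges. The paper argues directly: $s_i \cdot w$ replaces the value $i$ by $i+1$ in one component subsequence and $i+1$ by $i$ in another, and since $i,i+1$ are consecutive integers lying in \emph{different} components, neither substitution alters the relative order of the entries of any $w^{(k)}$; thus the insertion in each component proceeds identically and each recording tableau $Q_k$ is unchanged. You instead reduce to the right-admissible statement via the duality $\mathbf{Q}(w)=\mathbf{P}(w^{-1})$. This is valid, but note that the identity does not hold verbatim in $W_n$: inversion sends the exponent $a_j$ to $-a_j \pmod r$, so the component-wise symmetry reads $\mathbf{P}(w^{-1})_k = Q_{-k \bmod r}(w)$, and you must carry this reindexing through. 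You also need to check (as you anticipate) that the admissibility hypothesis for $L_i$ on $w$ translates exactly into the admissibility hypothesis for the corresponding right move on $w^{-1}$; it does, once the conditions are untangled. Your approach buys a clean reduction to a single case at the cost of establishing the inversion symmetry for $\mathbf{RS}$; the paper's direct argument avoids that bookkeeping entirely with a one-line observation about relative order.

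One caution: your opening sentence for $R_i$ conflates the indices $k,l$ (positions of the \emph{values} $i,i+1$) with the positions $i,i+1$ actually swapped by $w\cdot s_i$. The paper's own definitions have the hypotheses for $L_i$ and $R_i$ interchanged, which is likely the source of the confusion; the surrounding prose and proof make the intended reading clear, and your argument uses the correct one.
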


\begin{proof}
Applying a right admissible transformation $R_i$ to $w$ does not change the relative order of the entries sharing the coefficient $\zeta^{a_k}$.  Thus as ordered sets, we have $w^{(k)} = (R_i(w))^{(k)}$ for all $k$ and the first part of the proposition follows.  
Suppose that $\sigma_k$ and $\sigma_l$ have different exponents in $w$ and satisfy $|\sigma_k-\sigma_l|=1$.  Exchanging the two while preserving the exponents in each position preserves the order new boxes are appended in constructing the component tableaux of $\mathbf{P}(w)$, thus a left admissible operation fixes $\mathbf{Q}(w)$, as claimed.
\end{proof}

The multitableau $\mathbf{Q}(R_i(w))$ is obtained from $\mathbf{Q}(w)$ by exchanging boxes with labels $i$ and $i+1$.  The result is standard as the boxes labeled $i$ and $i+1$ lie in different component tableaux of $\mathbf{Q}(w)$.  
Furthermore,  $\Inv(\mathbf{Q}(R_i(w)))$ equals  $\Inv(\mathbf{Q}(w))$ with $i$ and $i+1$ interchanged and the inversion $(i+1,i)$ either added or removed.
In particular, a right admissible transformation changes  $\inv(\mathbf{Q}(w))$ by one while fixing $\inv(Q_k(w))$ for each $k$.  Similar reasoning can be applied to $\mathbf{P}(L_i(w))$, thereby proving:

\begin{proposition}\label{proposition:inversions} A right admissible transformation changes $\inv(\mathbf{Q}(w))$ by one while fixing $\inv(Q_k(w))$ for each $k$, while a left admissible transformation changes $\inv(\mathbf{P}(w))$ by one while fixing $\inv(P_k(w))$ for each $k$.
\end{proposition}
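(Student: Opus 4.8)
The plan is to build on the preceding proposition, which says that a right admissible transformation fixes $\mathbf{P}$ and a left admissible one fixes $\mathbf{Q}$: for a right admissible $R_i$ only the effect on $\mathbf{Q}$ must be understood, and dually for $L_i$ and $\mathbf{P}$. The two cases are mirror images of one another---$R_i$ transposes the entries of $w$ in positions $i$ and $i+1$, while $L_i$ transposes the occurrences of the values $i$ and $i+1$ in the one-line notation---so I would carry out the right admissible case in full and then transpose the argument.

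Fix $w=[\zeta^{a_1}\sigma_1,\dots,\zeta^{a_n}\sigma_n]$ with $a_i\neq a_{i+1}$ and put $p=a_i$, $q=a_{i+1}$. The first step is to pin down $\mathbf{Q}(R_i(w))$ exactly. Because $R_i$ merely swaps positions $i$ and $i+1$, and these carry different exponents, for every $k$ the ordered sequence $(R_i(w))^{(k)}$ is the same sequence of $\sigma$-values, in the same order, as $w^{(k)}$; only the recorded positions change, and only in components $p$ and $q$. Concretely, the box of $Q_p$ carrying the label $i$ comes to carry $i+1$, the box of $Q_q$ carrying $i+1$ comes to carry $i$, and no other box or label moves. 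Thus $\mathbf{Q}(R_i(w))$ is $\mathbf{Q}(w)$ with the labels $i$ and $i+1$ interchanged, these labels sitting in the distinct component tableaux $Q_p$ and $Q_q$; the result is again standard since $i+1$ does not occur in $Q_p$ and $i$ does not occur in $Q_q$.

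The second step is the inversion count. Viewing an inversion as an unordered pair of boxes in a prescribed relation of positions and labels: a pair containing neither the box of $i$ nor the box of $i+1$ keeps both its labels and so keeps its status; a pair formed from one of these two boxes and a box with another label $c$ is also unaffected, because $c\neq i,i+1$ and no integer lies strictly between $i$ and $i+1$, so replacing $i$ by $i+1$ (or conversely) does not change which of the two boxes carries the larger label. The sole remaining pair is the box of $i$ together with the box of $i+1$, which lie in the distinct components $Q_p$ and $Q_q$; such a between-component pair is an inversion exactly when the component holding the larger label precedes the one holding the smaller, and this is forced to flip when the two labels trade places between those components. Hence each $\inv(Q_k(w))$ is unchanged while $\inv(\mathbf{Q}(w))$ changes by exactly one, and transposing the argument---$L_i$ relocates the labels $i$ and $i+1$ between two distinct component tableaux of $\mathbf{P}(w)$---yields the claim for left admissible transformations. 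I expect the crux to be precisely this count: one must be sure that nothing except the pair $\{\text{box of }i,\text{box of }i+1\}$ is disturbed, and the clean way to see this is to isolate that pair and note that every other affected pair involves a third label whose comparisons with $i$ and with $i+1$ coincide.
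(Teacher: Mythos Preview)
Your proof is correct and follows the same approach as the paper: identify $\mathbf{Q}(R_i(w))$ as $\mathbf{Q}(w)$ with the labels $i$ and $i+1$ swapped between two distinct component tableaux, then observe that the only inversion affected is the cross-component pair $(i+1,i)$, with the left admissible case handled by symmetry. Your treatment is simply more explicit than the paper's, which compresses the inversion analysis into the single observation that $\Inv(\mathbf{Q}(R_i(w)))$ is $\Inv(\mathbf{Q}(w))$ with $i$ and $i+1$ interchanged and the inversion $(i+1,i)$ added or removed.
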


We are ready to prove the main result of this section which shows that it is only necessary to verify our formula for the
$\sgn_i$ representations in the setting of ascending elements of $W_n$.

\begin{lemma}\label{lemma:ascending}
Consider $w \in W_n$ and let $\tilde{w}$ be an ascending representative of $w$.  Then for $0 \leq i < r$,
\begin{align*}
\pi_i(w) = \sgn_i(w) & \; \mbox{ if and only if } \; \pi_i(\tilde{w}) = \sgn_i(\tilde{w}).
\end{align*}
\end{lemma}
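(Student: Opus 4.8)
The strategy is to show that both sides of the formula $\pi_i(w) = \sgn_i(w)$ transform in the same way under a single admissible operator, so that the biconditional propagates along the equivalence class connecting $w$ to $\tilde{w}$. Since the equivalence relation is the transitive closure of the relations $L_i(w)\sim w$ and $R_i(w)\sim w$, it suffices to prove that for any admissible operator $A$ (left or right) applied to any $v \in W_n$, we have $\pi_i(v) = \sgn_i(v)$ if and only if $\pi_i(A(v)) = \sgn_i(A(v))$. Equivalently, writing each quantity as an element of the group $\{\pm 1\}$ times a root of unity, I want to check that the ratio $\pi_i(v)/\pi_i(A(v))$ equals the ratio $\sgn_i(v)/\sgn_i(A(v))$.

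First I would handle the right-hand side. If $A = R_i$, then $A(v) = v \cdot s_i$, so $\sgn_i(A(v)) = \sgn_i(v)\cdot\sgn_i(s_i) = -\sgn_i(v)$ since $\sgn_i(s_j) = -1$ for $j \geq 1$ by Definition~\ref{def:one-diml-representations}; the same holds for $A = L_i$ since $\sgn_i$ is a homomorphism. So on the representation side, applying any admissible operator simply flips the sign.

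Next I would compute the effect of $A$ on $\pi_i$. Take $A = R_i$ (the left case being symmetric, exchanging the roles of $\mathbf{P}$ and $\mathbf{Q}$). By the proposition preceding this lemma, $\mathbf{P}(R_i(v)) = \mathbf{P}(v)$, so the factors $(-1)^{e(\mathbf{P})}$, $\sign(\mathbf{P})$, and $(\zeta^i)^{\spin(\mathbf{P})}$ are all unchanged. The multitableau $\mathbf{Q}(R_i(v))$ is obtained from $\mathbf{Q}(v)$ by swapping the boxes labeled $i$ and $i+1$, which lie in different component tableaux; hence the shapes of the components, and therefore $\spin(\mathbf{Q})$, are unchanged — so the factor $(\zeta^i)^{\spin(\mathbf{Q})}$ is fixed as well. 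The only factor that moves is $\sign(\mathbf{Q}) = (-1)^{\inv(\mathbf{Q})}$, and by Proposition~\ref{proposition:inversions} a right admissible transformation changes $\inv(\mathbf{Q}(v))$ by exactly one, so $\sign(\mathbf{Q})$ flips. Therefore $\pi_i(R_i(v)) = -\pi_i(v)$, matching the behavior of $\sgn_i$. The symmetric argument for $L_i$ uses $\mathbf{Q}(L_i(v)) = \mathbf{Q}(v)$ and the second half of Proposition~\ref{proposition:inversions} to conclude $\pi_i(L_i(v)) = -\pi_i(v)$.

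Since both $\pi_i$ and $\sgn_i$ are negated by every admissible operator, the equality $\pi_i = \sgn_i$ at one element of an equivalence class forces it at every element reachable by a single operator, and then at every element of the class by the transitive closure; in particular it holds at $w$ if and only if it holds at $\tilde{w}$. The only mild subtlety — and the place I would be most careful — is confirming that $\spin(\mathbf{Q})$ really is invariant under $R_i$: this rests on the observation that the labels $i$ and $i+1$ occupy boxes in distinct components of $\mathbf{Q}(v)$ (already noted in the text preceding Proposition~\ref{proposition:inversions}), so swapping them permutes boxes between components of equal... rather, it moves one box out of each of two components and back, leaving each component's shape unchanged; hence $\sum_k k\cdot|\sh(Q_k)|$ is unaffected. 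Everything else is bookkeeping with the multiplicative structure of the two sides.
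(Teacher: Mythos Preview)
Your proof is correct and follows essentially the same approach as the paper: both arguments show that an admissible operator negates $\sgn_i$ (because it is multiplication by some $s_j$ with $j\geq 1$) and negates $\pi_i$ (because shapes, hence $e$ and $\spin$, are preserved while Proposition~\ref{proposition:inversions} flips exactly one of $\sign(\mathbf{P})$, $\sign(\mathbf{Q})$). The only cosmetic difference is that the paper packages the chain of admissible operators as $\tilde{w}=a\cdot w\cdot b$ with $a,b\in S_n$ and tracks the total sign change $(-1)^{\ell(a)+\ell(b)}$ at once, whereas you do it one operator at a time and pass to the transitive closure.
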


    \begin{proof}
        First note that $\tilde{w} = a \cdot w \cdot b$ for $a,b \in S_n$ and consequently for each $i$,
        $$\sgn_i(\tilde{w})= (-1)^{\ell(a)+ \ell(b)} \cdot \sgn_i(w).$$
        Since $e$ and $\spin$ are shape-based statistics and admissible transformations preserve the shapes of the underlying tableaux, the only possible difference between $\pi_i(w)$ and $\pi_i(\tilde{w})$ lies in the signs of their left and right tableaux.  This is addressed explicitly in Proposition \ref{proposition:inversions}.  We obtain
        $\pi_i(\tilde{w})= (-1)^{\ell(a)+ \ell(b)} \cdot \pi_i(w)$
        and the lemma follows.
    \end{proof}

\subsection{The general sign formula}

\begin{theorem}\label{main_theorem}
Let $w \in W_n$ and write $\RSnew(w) = (\Ptab,\Qtab)$ for its image under the
generalized Robinson-Schensted map.  Given a primitive $r^{th}$ root of unity $\zeta$ and the associated family $\{\sgn_i \}_{i=0}^{r-1}$ of representations of $W_n$, we have
$$\sgn_i(w) = (-1)^{e(\Ptab)} \cdot (\zeta^i)^{\spin(\Ptab) + \spin(\Qtab)} \cdot \sign(\Ptab) \cdot \sign(\Qtab).$$
\end{theorem}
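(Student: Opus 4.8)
The plan is to use Lemma~\ref{lemma:ascending} to reduce to the case of an ascending element $w$, and then to observe that such a $w$ is, after standardizing each block of equal exponents, a disjoint union of ordinary permutations; the classical Reifegerste--Sj\"ostrand identity can then be applied one component tableau at a time.

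So let $w = [\zeta^{a_1}\sigma_1, \ldots, \zeta^{a_n}\sigma_n]$ be ascending and let $\sigma_w = (\sigma_1, \ldots, \sigma_n) \in S_n$ be its permutation part. First I would compute the left-hand side. Factoring the monomial matrix $w$ as the product of the permutation matrix of $\sigma_w$ with the diagonal matrix $\mathrm{diag}(\zeta^{a_1}, \ldots, \zeta^{a_n})$, and using that a one-dimensional representation is a class function while each matrix of the form $\mathrm{diag}(1,\ldots,\zeta,\ldots,1)$ is conjugate to $s_0$ in $W_n$, we obtain $\sgn_i(w) = \zeta^{i(a_1 + \cdots + a_n)}\cdot\sgn(\sigma_w)$, where $\sgn$ denotes the ordinary sign on $S_n = \langle s_1, \ldots, s_{n-1}\rangle$. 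Since $\RSnew$ produces same-shape pairs, $\spin(\Ptab) = \spin(\Qtab)$, and $a_1 + \cdots + a_n = 2\spin(\Ptab)$ as computed in the proof of the proposition characterizing the image of $G(r,p,n)$; hence $(\zeta^i)^{\spin(\Ptab)+\spin(\Qtab)} = \zeta^{i(a_1+\cdots+a_n)}$. The theorem will therefore follow once we show
$$\sgn(\sigma_w) = (-1)^{e(\Ptab)}\cdot\sign(\Ptab)\cdot\sign(\Qtab).$$

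To prove this last identity for ascending $w$, note that because the exponents are weakly increasing each block of positions carrying a fixed coefficient $\zeta^k$ is an interval, and because the value sets $w^{(0)}, \ldots, w^{(r-1)}$ are strictly increasing and partition $\{1,\ldots,n\}$, that block of positions carries exactly the interval of values $w^{(k)}$. Hence $\sigma_w$ is block-diagonal, $\sigma_w = u^{(0)}\oplus\cdots\oplus u^{(r-1)}$ with $u^{(k)}$ the standardization of $w^{(k)}$, and $RS(w^{(k)}) = (P_k, Q_k)$ is obtained from $RS(u^{(k)})$ by the corresponding order-preserving shift of labels. Standardization preserves the shape of a tableau and the relative order of its labels, hence preserves $e(T)$ and $\inv(T)$; applying the classical formula to each $u^{(k)}$ and multiplying over $k$ (using multiplicativity of $\sgn$ and block-diagonality of $\sigma_w$) gives $\sgn(\sigma_w) = (-1)^{e(\Ptab)}\cdot\prod_k\sign(P_k)\cdot\prod_k\sign(Q_k)$. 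Finally, $\Ptab$ and $\Qtab$ are ascending multitableaux, so for $k < l$ every label of $P_k$ is smaller than every label of $P_l$; thus no pair $(j,i)$ with $j > i$ has $j$ a label of $P_k$ and $i$ a label of $P_l$, so $\Inv(P_k, P_l) = \emptyset$, and likewise for $\Qtab$. Therefore $\inv(\Ptab) = \sum_k\inv(P_k)$ and $\inv(\Qtab) = \sum_k\inv(Q_k)$, i.e. $\sign(\Ptab) = \prod_k\sign(P_k)$ and $\sign(\Qtab) = \prod_k\sign(Q_k)$, which is exactly what was needed. By Lemma~\ref{lemma:ascending} the formula then holds for every $w \in W_n$.

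The only step with real content beyond assembling results already in hand is the decoupling of an ascending element: checking that $\sigma_w$ really splits into independent permutations whose component tableaux match those of $w$, and that the cross-component terms $\Inv(P_k, P_l)$ appearing in the definition of $\Inv(\Ptab)$ vanish precisely because $\Ptab$ is ascending. The root-of-unity bookkeeping, the spin identity, and the reduction to ascending representatives are all already available.
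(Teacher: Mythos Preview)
Your proof is correct and follows essentially the same route as the paper's. The paper factors an ascending $w$ as $u_0 \cdots u_{r-1} \cdot u_r$ with $u_k \in S_{\mathbf{n}_k}$ and $u_r$ the diagonal part, which is exactly your decomposition $w = \sigma_w \cdot D$ with $\sigma_w$ block-diagonal; your use of standardization and the conjugacy argument for the diagonal factor are cosmetic variants of the paper's direct computations, and the vanishing of the cross-component inversions for ascending multitableaux is handled identically.
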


\begin{proof}
By Lemma \ref{lemma:ascending}, it is enough to verify the above formula for ascending elements of $W_n$.  Consequently,  consider an ascending $w \in W_n$, writing it in one-line notation as
$w=[\zeta^{a_1} \sigma_1, \zeta^{a_2} \sigma_2, \ldots, \zeta^{a_n} \sigma_n ].$  Let $\{P_k\}_{k=0}^{r-1}$ and $\{Q_k\}_{k=0}^{r-1}$ be the component tableaux of  $\mathbf{P}$ and $\mathbf{Q}$.  Since the latter tableaux are both ascending and of the same shape, for each $k$, $P_k$ and $Q_k$ are labeled by the same set of integers, say $\mathbf{n}_k$.  Let $S_{\mathbf{n}_k}$ be the group of permutations of $\mathbf{n}_k$ and  define $u_k = {RS}^{-1}(P_k, Q_k) \in S_{\mathbf{n}_k}$ for the natural extension of the classical Robinson-Schensted map to the ordered set $\mathbf{n}_k$.  Let $u_k= 1 \in W_n$ if $\mathbf{n}_k =\emptyset$.  Also let $u_r = [ \zeta^{a_1} 1, \zeta^{a_2} 2, \ldots, \zeta^{a_n} n ] \in W_n$.  Note that we can view $S_{\mathbf{n}_k}$ as a subgroup of $S_n$ for each $k$.  From this perspective,  since $\mathbf{P}$ and $\mathbf{Q}$ are ascending, we have $w = u_0 \cdot u_1 \cdots u_r$ and consequently
\begin{align*}
\sgn_i(w) & = \sgn_i(u_0) \cdot \sgn_i(u_1) \cdots \sgn_i(u_r)\\
& = \sgn(u_0) \cdot \sgn(u_1) \cdots \sgn({u_{r-1}}) \cdot \sgn_i(u_r)
\end{align*}
where $\sgn$ is the usual sign of a permutation.

We would like to reduce this expression to the formula proposed by the theorem.  The first step invokes the original result of A.~Reifegerste~\cite{reifegerste} and J.~Sj\"ostrand~\cite{sjostrand}.  For $k<r$, we have
$\sgn(u_k) = (-1)^{e_k} \cdot \sign(P_k) \cdot \sign(Q_k),$
where $e_k = e(P_k)$. Furthermore, $\sgn_i(u_r)$ is easily computed to be $(\zeta^i)^{\sum_{k=1}^n a_k}$ and consequently we have:
$$\sgn_i(w)=(-1)^{\sum_{k=0}^{r-1} e_k} \cdot (\zeta^i)^{\sum_{k=1}^n a_k} \cdot (-1)^{\sum_{k=0}^{r-1} \inv(P_k) + \inv(Q_k)}.$$
To further simplify the above expression we note that $\sum_{k=0}^{r-1} e_k = e(\mathbf{P})$,  by definition.  Furthermore, if $n_k=|w^{(k)}|$ is the cardinality of the set of entries of $w$ with exponent equal to $k$, then $\sum_{k=1}^n a_k = \sum_{k=0}^{r-1} k \cdot n_k = 2 \spin(\mathbf{P}) = \spin(\mathbf{P}) + \spin(\mathbf{Q}).$

Finally, since both $\mathbf{P}$ and $\mathbf{Q}$ are ascending tableaux, the sets of inversions $\Inv(P_k, P_l)$ and $\Inv(Q_k, Q_l)$ are both empty for $k < l$.  Consequently, $\inv(\mathbf{P})$ and $\inv(\mathbf{Q})$ are just the sums of the numbers of inversions in their component tableaux.  The theorem now holds by direct substitution.
\end{proof}

\begin{example}
Continuing with $w$ from Example \ref{running_example}, we have the ascending representative $\tilde{w} = [1, 3, 2, 4, \zeta^1 \, 6, \zeta^1 \, 5, \zeta^2 \, 7, \zeta^2 \, 8]$.  Its Robinson-Schensted image is:
$$\RSnew(\tilde{w})=(\Ptab, \Qtab) =  \begin{tiny}\left( \; \left( \, \young(124,3) \, , \, \young(5,6) \, , \, \young(78) \, , \, \emptyset \, \right) \;, \;\left( \, \young(124,3) \, , \, \young(5,6) \, , \, \young(78) \, , \, \emptyset \, \right) \; \right).\end{tiny}$$
In contrast to the behavior of the Robinson-Schensted map in the setting of classical Weyl groups, the fact that $\Ptab$ and $\Qtab$
coincide does not necessarily imply that the corresponding group element is an
involution, as  occurs here. We decompose $\tilde{w}$ as $u_0 \cdot u_1 \cdot u_2 \cdot u_3 \cdot u_4$ where $u_3=1$ and:
\begin{align*}
&u_0 = [1,3,2,4] \in S_{\{1,2,3,4\}}
&u_1 &= [6,5] \in S_{\{5,6\}}\\
&u_2 = [7,8] \in S_{\{7,8\}}
&u_4 &= [ 1,  2,  3, 4, \zeta^1 \, 5, \zeta^1 \, 6, \zeta^2 \, 7, \zeta^2 \, 8] \in W_8.
\end{align*}
Since $\sgn(u_0)=\sgn(u_1)=-1$, $\sgn(u_2)=1$, and $\sgn_i(u_4) = (\zeta^i)^2$, we compute $\sgn_i(\tilde{w}) = (\zeta^i)^2$.  Furthermore, since $\inv(\Ptab)=\inv(\Qtab)=1$, $e(\Ptab)=2$, and $\spin(\Ptab)=\spin(\Qtab)=3$, we compute $\pi_i(\tilde{w}) = (\zeta^i)^2$, which agrees with $\sgn_i(\tilde{w})$, as expected.
\end{example}

\begin{remark}
It is also possible to approach the present problem by examining the coplactic classes on $W_n$ defined by the Robinson-Schensted map instead of the equivalence classes used here.  In lieu of an ascending class representative, one can use the unique element in each coplactic class whose left multitableau has no inversions.  Using the generalized Knuth relations for $W_n$ described in \cite{iancu}, one can then produce the same formula by following their action on $\sgn_i$, albeit with a little more work.   The advantage of the equivalence classes described in this paper is that they reduce the problem to the symmetric group setting as completely as possible.
\end{remark}


\end{document}